\newtheorem{theorem}{Theorem}
\newtheorem{lemma}{Lemma}
\newtheorem{proposition}{Proposition}
\newtheorem*{mattreethm}{Kirchhoff's Matrix-Tree Theorem}
\newtheorem*{matrixdetlemma}{The Matrix Determinant Lemma}
\DeclareMathOperator{\adj}{adj}
\newcommand{\vu}{\mathbf{u}}
\newcommand{\vv}{\mathbf{v}}
\begin{document}

\title{Linear Algebraic Techniques for Spanning Tree Enumeration}
\markright{Counting Spanning Trees}
\author{Steven Klee and Matthew T. Stamps}

\maketitle

\begin{abstract}
Kirchhoff's matrix-tree theorem asserts that the number of spanning trees in a finite graph can be computed from the determinant of any of its reduced Laplacian matrices.  In many cases, even for well-studied families of graphs, this can be computationally or algebraically taxing.  We show how two well-known results from linear algebra, the matrix determinant lemma and the Schur complement, can be used to count the spanning trees in several significant families of graphs in an elegant manner.  
\end{abstract}

%%%%%%%%%%%%%%%%%%%%%
%%%%%%%%%%%%%%%%%%%%%
\section{Introduction}
%%%%%%%%%%%%%%%%%%%%%
%%%%%%%%%%%%%%%%%%%%%

A \textbf{graph} $G$ consists of a finite set of vertices and a set of edges that connect some pairs of vertices.  For the purposes of this article, we will assume that all graphs are simple, meaning they do not contain loops (an edge connecting a vertex to itself) or multiple edges between a given pair of vertices.  We will use $V(G)$ and $E(G)$ to denote the vertex set and edge set of $G$, respectively.    For example, the graph $G$ with \begin{center} $V(G) = \{1,2,3,4\}$ \quad and \quad $E(G) = \{\{1,2\},\{2,3\},\{3,4\},\{1,4\},\{1,3\}\}$ \end{center} is shown in Figure \ref{spanning-tree-non-examples}.  

A \textbf{spanning tree} in a graph $G$ is a subgraph $T \subseteq G$, meaning $T$ is a graph with $V(T) \subseteq V(G)$ and $E(T) \subseteq E(G)$, that satisfies three conditions: 
\begin{enumerate}\itemsep 5pt
\item Every vertex in $G$ is a vertex in $T$; 
\item $T$ is connected, meaning it is possible to walk between any two vertices in $G$ using only edges in $T$; and 
\item $T$ does not contain any cycles.
\end{enumerate}
Figure \ref{spanning-tree-non-examples} shows an example and several nonexamples of spanning trees in our example graph $G$ to illustrate the relevance of each of these conditions. 

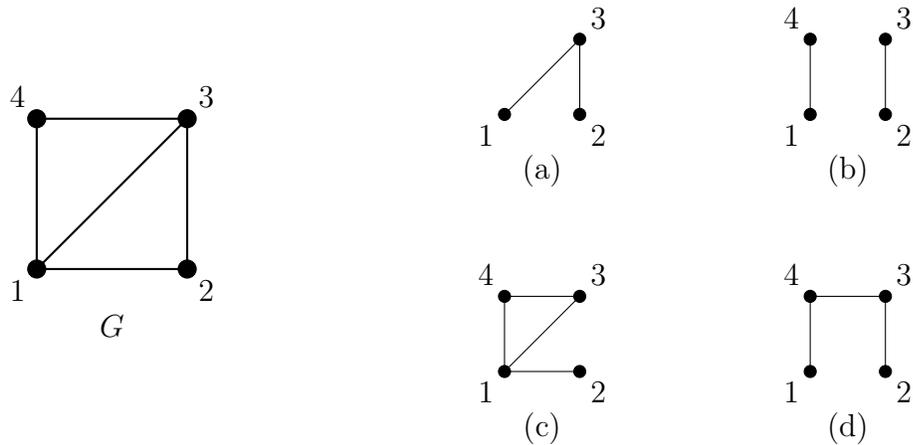
\begin{figure}[H]
\begin{tabular}{>{\centering\arraybackslash}m{.425\textwidth}>{\centering\arraybackslash}m{.225\textwidth}>{\centering\arraybackslash}m{.225\textwidth}}
% initial graph 
\multirow{2}{*}[2pt]{
\begin{tikzpicture}
\foreach \v in {(0,0), (2,0), (2,2), (0,2)}{
\draw[fill=black] \v circle (.12);
}
\draw [thick] (0,0) node[anchor = north east] {$1$}  -- (2,0) node[anchor = north west] {$2$} -- (2,2) node[anchor = south west] {$3$} -- (0,2) node[anchor = south east] {$4$} -- (0,0) -- (2,2);
\draw (1,-0.75) node {$G$};
\end{tikzpicture}
}

&
%% not spanning
\begin{tikzpicture}
\foreach \v in {(0,0), (1,0), (1,1)}{
\draw[fill=black] \v circle (.08);
}
\draw (0,0) node[anchor = north east] {$1$};
\draw (1,0) node[anchor = north west] {$2$};
\draw (1,1) node[anchor = south west] {$3$};

\draw (0,0) -- (1,1) -- (1,0);
\draw(0.5,-0.75) node {(a)};
\end{tikzpicture}

&
% not connected 
\begin{tikzpicture}
\foreach \v in {(0,0), (1,0), (1,1), (0,1)}{
\draw[fill=black] \v circle (.08);
}
\draw (0,0) node[anchor = north east] {$1$};
\draw (1,0) node[anchor = north west] {$2$};
\draw (1,1) node[anchor = south west] {$3$};
\draw (0,1) node[anchor = south east] {$4$};

\draw (0,0) -- (0,1);
\draw (1,0) -- (1,1);
\draw(0.5,-0.75) node {(b)};
\end{tikzpicture}

\\%[-0.5em] 
& 
% contains a cycle 
\begin{tikzpicture}
\foreach \v in {(0,0), (1,0), (1,1), (0,1)}{
\draw[fill=black] \v circle (.08);
}
\draw (0,0) node[anchor = north east] {$1$};
\draw (1,0) node[anchor = north west] {$2$};
\draw (1,1) node[anchor = south west] {$3$};
\draw (0,1) node[anchor = south east] {$4$};

\draw (0,0) -- (0,1) -- (1,1) -- (0,0) -- (1,0);
\draw(0.5,-0.75) node {(c)};
\end{tikzpicture}

& 
% spanning tree
\begin{tikzpicture}
\foreach \v in {(0,0), (1,0), (1,1), (0,1)}{
\draw[fill=black] \v circle (.08);
}
\draw (0,0) node[anchor = north east] {$1$};
\draw (1,0) node[anchor = north west] {$2$};
\draw (1,1) node[anchor = south west] {$3$};
\draw (0,1) node[anchor = south east] {$4$};

\draw (0,0) -- (0,1) -- (1,1) -- (1,0);

\draw(0.5,-0.75) node {(d)};
\end{tikzpicture}

\end{tabular}
\vspace{-10pt}
%\captionsetup{width=\linewidth}
\caption{A graph $G$ (left) and four of its subgraphs (right).  Subgraph (a) is not a spanning tree because it does not include all the vertices of $G$; subgraph (b) is not a spanning tree because it is not connected; and subgraph (c) is not a spanning tree because it contains a cycle on vertices $1$, $3$, and $4$. Subgraph (d), on the other hand, is a spanning tree. }
\label{spanning-tree-non-examples}
\end{figure}

For our example graph $G$, it is possible to write down all the spanning trees.  This is shown in Figure \ref{spanning-tree-list}.  

\begin{figure}[H]
\begin{tabular}{>{\centering\arraybackslash}m{.225\textwidth}>{\centering\arraybackslash}m{.225\textwidth}>{\centering\arraybackslash}m{.225\textwidth}>{\centering\arraybackslash}m{.225\textwidth}}
%\begin{tabular}{cccc}

\begin{tikzpicture}
\foreach \v in {(0,0), (1,0), (1,1), (0,1)}{
\draw[fill=black] \v circle (.08);
}
\draw (0,0) node[anchor = north east] {$1$};
\draw (1,0) node[anchor = north west] {$2$};
\draw (1,1) node[anchor = south west] {$3$};
\draw (0,1) node[anchor = south east] {$4$};

\draw (0,0) -- (0,1) -- (1,1) -- (1,0);
\end{tikzpicture}

&

\begin{tikzpicture}
\foreach \v in {(0,0), (1,0), (1,1), (0,1)}{
\draw[fill=black] \v circle (.08);
}
\draw (0,0) node[anchor = north east] {$1$};
\draw (1,0) node[anchor = north west] {$2$};
\draw (1,1) node[anchor = south west] {$3$};
\draw (0,1) node[anchor = south east] {$4$};

\draw (0,1) -- (1,1) -- (1,0) -- (0,0);
\end{tikzpicture}

&

\begin{tikzpicture}
\foreach \v in {(0,0), (1,0), (1,1), (0,1)}{
\draw[fill=black] \v circle (.08);
}
\draw (0,0) node[anchor = north east] {$1$};
\draw (1,0) node[anchor = north west] {$2$};
\draw (1,1) node[anchor = south west] {$3$};
\draw (0,1) node[anchor = south east] {$4$};

\draw (1,1) -- (1,0) -- (0,0) -- (0,1);
\end{tikzpicture}

&

\begin{tikzpicture}
\foreach \v in {(0,0), (1,0), (1,1), (0,1)}{
\draw[fill=black] \v circle (.08);
}
\draw (0,0) node[anchor = north east] {$1$};
\draw (1,0) node[anchor = north west] {$2$};
\draw (1,1) node[anchor = south west] {$3$};
\draw (0,1) node[anchor = south east] {$4$};

\draw (1,0) -- (0,0) -- (0,1) -- (1,1);

\end{tikzpicture}

\\%[-0.5em]

\begin{tikzpicture}
\foreach \v in {(0,0), (1,0), (1,1), (0,1)}{
\draw[fill=black] \v circle (.08);
}
\draw (0,0) node[anchor = north east] {$1$};
\draw (1,0) node[anchor = north west] {$2$};
\draw (1,1) node[anchor = south west] {$3$};
\draw (0,1) node[anchor = south east] {$4$};

\draw (0,1) -- (0,0) -- (1,1) -- (1,0);
\end{tikzpicture}

&

\begin{tikzpicture}
\foreach \v in {(0,0), (1,0), (1,1), (0,1)}{
\draw[fill=black] \v circle (.08);
}
\draw (0,0) node[anchor = north east] {$1$};
\draw (1,0) node[anchor = north west] {$2$};
\draw (1,1) node[anchor = south west] {$3$};
\draw (0,1) node[anchor = south east] {$4$};

\draw (0,1) -- (0,0) -- (1,0);
\draw (0,0) -- (1,1);
\end{tikzpicture}

&

\begin{tikzpicture}
\foreach \v in {(0,0), (1,0), (1,1), (0,1)}{
\draw[fill=black] \v circle (.08);
}
\draw (0,0) node[anchor = north east] {$1$};
\draw (1,0) node[anchor = north west] {$2$};
\draw (1,1) node[anchor = south west] {$3$};
\draw (0,1) node[anchor = south east] {$4$};
\draw (0,1) -- (1,1) -- (1,0);
\draw (0,0) -- (1,1);

\end{tikzpicture}

&

\begin{tikzpicture}
\foreach \v in {(0,0), (1,0), (1,1), (0,1)}{
\draw[fill=black] \v circle (.08);
}
\draw (0,0) node[anchor = north east] {$1$};
\draw (1,0) node[anchor = north west] {$2$};
\draw (1,1) node[anchor = south west] {$3$};
\draw (0,1) node[anchor = south east] {$4$};

\draw (0,1) -- (1,1) -- (0,0) -- (1,0);
\end{tikzpicture}

\end{tabular}
\vspace{-10pt}
\caption{All the spanning trees in the graph $G$ from Figure~\ref{spanning-tree-non-examples}.}
\label{spanning-tree-list}
\end{figure}
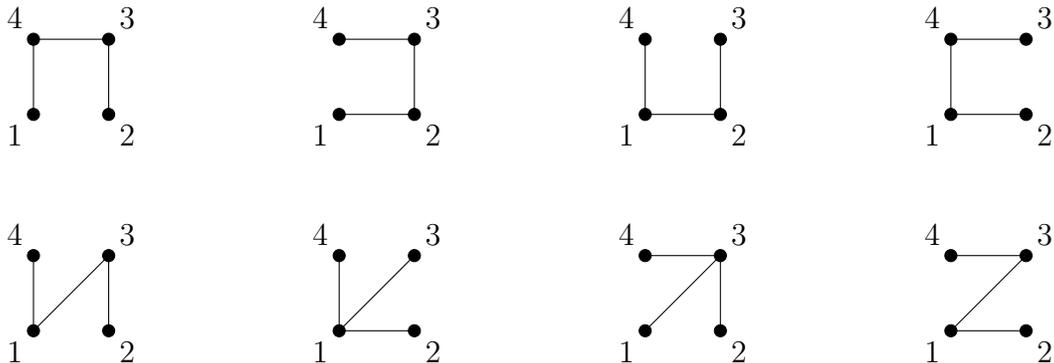

In general, the number of spanning trees in a graph can be quite large, and exhaustively listing all of its spanning trees is not feasible.  For this reason, we need to be more resourceful when counting the spanning trees in a graph. Throughout this article, we will use $\tau(G)$ to denote the number of spanning trees in a graph $G$.  A common approach for calculating $\tau(G)$ involves linear algebraic techniques and the Laplacian matrix of $G$. 

If $G$ is a graph with vertex set $V = V(G)$, the \textbf{Laplacian matrix} of $G$, denoted by $L(G)$ is a $|V|\times |V|$ matrix whose rows and columns are indexed by vertices of $G$.  The entries of the Laplacian matrix are given by 

$$
L(G)(i,j) = 
\begin{cases}
\deg(i) & \text{ if } i=j, \\
-1 & \text{ if } i \neq j \text{ and } \{i,j\} \in E(G), \\
0 & \text{ otherwise.}
\end{cases}
$$ \smallskip

Here, $\deg(i)$ denotes the \textbf{degree} of vertex $i$ in $G$, which is the number of edges in $G$ that contain $i$.  It follows from the definition that the rows and columns of $L(G)$ sum to $0$, which means $L(G)$ is always singular.  Remarkably, if we delete any row and any column of $L(G)$, then up to a sign, the determinant of the resulting matrix counts the number of spanning trees in $G$.  This is the content of Kirchhoff's matrix-tree theorem \cite{Kirchhoff}, which we state more precisely below.  Let $L(G)_{\ell,m}$ denote the matrix obtained from $L(G)$ by deleting the $\ell$th row and $m$th column. This submatrix is sometimes called a \textbf{reduced Laplacian} of $G$.

\begin{mattreethm}
Let $G$ be a connected graph with vertex set $V(G) = \{1,\ldots,n\}$.  For any vertices $\ell,m \in V(G)$, not necessarily distinct, $$\tau(G) = (-1)^{\ell+m}\det(L(G)_{\ell,m}).$$ 
\end{mattreethm}

The Laplacian matrix of the graph $G$ from Figure \ref{spanning-tree-non-examples}, along with the reduced Laplacian $L(G)_{3,2}$ are
$$
L(G) = 
\left[
\begin{array}{rrrr}
3 & -1 & -1 & -1 \\
-1 & 2 & -1 & 0 \\
-1 & -1 & 3 & -1 \\
-1 & 0 & -1 & 2 
\end{array}
\right]
\quad \text{ and } \quad
L(G)_{3,2} = 
\left[
\begin{array}{rrr}
3  & -1 & -1 \\
-1 & -1 & 0 \\
-1 & -1 & 2 
\end{array}
\right].
$$ \smallskip

It can be checked that $\det(L(G)) = 0$ and $\det\left(L(G)_{3,2}\right) = -8 = (-1)^{3+2}\cdot 8$, which is consistent with the eight spanning trees we listed in Figure \ref{spanning-tree-list}.

For the remainder of this article, we will combine two well-known results in linear algebra with the matrix-tree theorem to obtain elegant explicit formulas for $\tau(G)$ in terms of the number and degrees of its vertices.

%%%%%%%%%%%%%%%%%%%%%
%%%%%%%%%%%%%%%%%%%%%
\section{Tools from linear algebra}
%%%%%%%%%%%%%%%%%%%%%
%%%%%%%%%%%%%%%%%%%%%

Even for simple graphs such as the one in Figure \ref{spanning-tree-non-examples}, computing the determinant of a reduced Laplacian can be computationally intensive.  Depending on the choice of which row $\ell$ and column $m$ are to be crossed out, the determinant of some reduced Laplacians might be easier to compute than others, but the necessity to make a such a choice may seem unsatisfactory.  In this section, we present our main result, which will play a central role throughout the subsequent sections of the article.  We proceed with the first of two well-known results from linear algebra. Recall that the \textbf{adjugate} of an $n \times n$ matrix is the transpose of its $n \times n$ matrix of cofactors.

\begin{matrixdetlemma}
Let $M$ be an $n \times n$ matrix and let $\vu$ and $\vv$ be column vectors in $\mathbb{R}^n$.  Then $$\det(M+\vu\vv^T) = \det(M) + \vv^T\adj(M)\vu. $$  In particular, if $M$ is invertible, then $\det(M+\vu\vv^T) = \det(M)\left(1+\vv^TM^{-1}\vu\right)$.
\end{matrixdetlemma}

A proof of the matrix determinant lemma can be found in \cite[\S0.8.5]{Horn-Johnson}, where it is referred to as \emph{Cauchy's formula for the determinant of a rank-one perturbation}.  The main ingredients in the proof are the fact that $\det(\cdot)$ is a multilinear operator on the rows of a matrix and the observation that $\vu\vv^T$ is a rank-one matrix. %For this reason, $M+\vu\vv^T$ is called a \textbf{rank-one update} to the matrix $M$.  
With this, we are ready to present our main result. 

\begin{lemma} \label{rank-one-update}
Let $G$ be a graph on vertex set $V$ with Laplacian matrix $L$ and let $\mathbf{u}=(u_i)_{i \in V}$ and $ \mathbf{v}=(v_i)_{i \in V}$ be column vectors in $\mathbb{R}^V$.  Then $$\det(L + \mathbf{u}\mathbf{v}^T) = \left(\sum_{i \in V} u_i \right) \cdot \left(\sum_{i \in V} v_i \right) \cdot \tau(G).$$
\end{lemma}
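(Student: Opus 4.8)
The plan is to reduce everything to the Matrix Determinant Lemma together with Kirchhoff's theorem. First I would dispose of the degenerate cases. If either $\sum_i u_i = 0$ or $\sum_i v_i = 0$, the right-hand side is $0$, and I would argue that the left-hand side vanishes too: the all-ones vector $\mathbf{1}$ satisfies $L\mathbf{1} = 0$ (rows of $L$ sum to $0$), so if $\sum_i v_i = \mathbf{v}^T\mathbf{1} = 0$ then $(L + \mathbf{u}\mathbf{v}^T)\mathbf{1} = \mathbf{u}(\mathbf{v}^T\mathbf{1}) = 0$, making the matrix singular; symmetrically, if $\sum_i u_i = \mathbf{1}^T\mathbf{u} = 0$ then $\mathbf{1}^T(L + \mathbf{u}\mathbf{v}^T) = 0^T$, again forcing the determinant to $0$. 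One should also observe that if $G$ is disconnected then $\tau(G) = 0$ by definition, and in that case $L$ has nullity at least $2$ (one for each component), so $L + \mathbf{u}\mathbf{v}^T$ — a rank-one perturbation — is still singular; hence the identity holds trivially. So from now on I may assume $G$ is connected and both coordinate sums are nonzero.

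Next, apply the Matrix Determinant Lemma directly:
$$
\det(L + \mathbf{u}\mathbf{v}^T) = \det(L) + \mathbf{v}^T\adj(L)\mathbf{u} = \mathbf{v}^T\adj(L)\mathbf{u},
$$
since $\det(L) = 0$. So the whole problem comes down to identifying the adjugate $\adj(L)$. The key claim is that $\adj(L) = \tau(G)\, J$, where $J$ is the all-ones $|V|\times|V|$ matrix. Granting this, $\mathbf{v}^T\adj(L)\mathbf{u} = \tau(G)\,\mathbf{v}^T J \mathbf{u} = \tau(G)\left(\sum_i v_i\right)\left(\sum_i u_i\right)$, which is exactly the desired formula.

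It remains to prove $\adj(L) = \tau(G)\,J$, and this is the main obstacle — though a mild one. The $(m,\ell)$ entry of $\adj(L)$ is the $(\ell,m)$ cofactor of $L$, namely $(-1)^{\ell+m}\det(L_{\ell,m})$, which by the Matrix-Tree Theorem equals $\tau(G)$ for \emph{every} pair $(\ell,m)$; so every entry of $\adj(L)$ equals $\tau(G)$, giving $\adj(L) = \tau(G)\,J$. (Alternatively, and perhaps more in the spirit of using only linear algebra: since $L$ is singular with $\operatorname{rank}(L) = n-1$ when $G$ is connected, $\adj(L)$ has rank $1$; from $L\,\adj(L) = \det(L)\,I = 0$ and $\adj(L)\,L = 0$ one gets that every column of $\adj(L)$ lies in $\ker L = \operatorname{span}\{\mathbf{1}\}$ and every row in the left kernel $=\operatorname{span}\{\mathbf{1}^T\}$, forcing $\adj(L) = cJ$ for a scalar $c$; comparing a single cofactor with the Matrix-Tree Theorem pins down $c = \tau(G)$.) Either route finishes the proof; I would present the first, since the paper has already stated the Matrix-Tree Theorem and this keeps the argument to a few lines.
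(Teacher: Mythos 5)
Your proposal is correct and follows essentially the same route as the paper: apply the matrix determinant lemma with $\det(L)=0$ and use Kirchhoff's theorem to identify $\adj(L)=\tau(G)\,\mathbf{1}_V\mathbf{1}_V^T$. Your preliminary case analysis (zero coordinate sums, disconnected $G$) is harmless but unnecessary, since the adjugate identity already covers those situations; otherwise the argument matches the paper's proof line for line.
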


\begin{proof}
Let $\mathbf{1}_{V, V}$ denote the $|V| \times |V|$ matrix of ones and let $\mathbf{1}_V$ denote the $|V|\times1$ column vector of ones.  
%By the Matrix Determinant Lemma, $$\det(L + \mathbf{u}\mathbf{v}^T) = \det(L) + \mathbf{v}^T \adj(L) \mathbf{u}.$$ 
By Kirchoff's matrix-tree theorem, every cofactor of $L$ is equal to $\tau(G)$, so $\adj(L)$ can be written as $\tau(G) \mathbf{1}_{V, V} = \tau(G) \mathbf{1}_V \mathbf{1}_V^T.$ Therefore, by the matrix determinant lemma, 
\begin{eqnarray*}
\det(L + \mathbf{u}\mathbf{v}^T) &=& \det(L) + \mathbf{v}^T \adj(L) \mathbf{u} \\[0.5em]
&=& 0 + \mathbf{v}^T\left(\tau(G) \mathbf{1}_V \mathbf{1}_V^T\right)\mathbf{u} \\[0.5em]
&=& \left(\mathbf{v}^T\mathbf{1}_V\right)\left( \mathbf{1}_V^T\mathbf{u}\right) \cdot \tau(G)\\[0.5em]
&=& \left(\sum_{i \in V} v_i \right) \cdot \left(\sum_{i \in V} u_i \right) \cdot \tau(G).
\end{eqnarray*}
\end{proof}

In the case that $\vu\vv^T = \mathbf{1}_{V, V}$, this result is credited to Temperley \cite{Temperley}. 

%%%%%%%%%%%%%%%%%%%%%
%%%%%%%%%%%%%%%%%%%%%
\subsection{The Schur complement of a matrix.}
%%%%%%%%%%%%%%%%%%%%%
%%%%%%%%%%%%%%%%%%%%%

Later in this article, there will be instances in which we partition the vertices of a graph into disjoint subsets as $V(G) = V_1 \cup V_2$.  In such instances, the Laplacian matrix of $G$ can be decomposed into a block matrix of the form 

$$\begin{bmatrix} A & B \\ C & D \end{bmatrix},$$ \vspace{-1pt}

where the first $|V_1|$ rows and columns correspond to the vertices in $V_1$ and the last $|V_2|$ rows and columns correspond to the vertices in $V_2$.  In this case, $A$ and $D$ are square matrices of sizes $|V_1| \times |V_1|$ and $|V_2| \times |V_2|$, respectively.  

Suppose $M$ is any square matrix that can be decomposed into blocks $A, B, C, D$ as above with $A$ and $D$ square.  If $D$ is invertible, then the \textbf{Schur complement} of $D$ in $M$ is defined as $M/D:= A- BD^{-1}C$.  The fundamental reason for using Schur complements is the following result in \cite[\S0.8.5]{Horn-Johnson}.

\begin{lemma} \label{schur-complement}
Let $M$ be a square matrix decomposed as above into blocks $A$, $B$, $C$, and $D$ with $A$ and $D$ square and $D$ invertible.  Then $$\det(M) = \det(D) \cdot \det(A-BD^{-1}C).$$
\end{lemma}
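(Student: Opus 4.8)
The plan is to prove the block-determinant identity by exhibiting an explicit factorization of $M$ into a product of block-triangular matrices whose determinants are easy to read off. Specifically, I would write
$$
\begin{bmatrix} A & B \\ C & D \end{bmatrix}
=
\begin{bmatrix} I & BD^{-1} \\ 0 & I \end{bmatrix}
\begin{bmatrix} A - BD^{-1}C & 0 \\ 0 & D \end{bmatrix}
\begin{bmatrix} I & 0 \\ D^{-1}C & I \end{bmatrix},
$$
where the identity blocks have the appropriate sizes. This is the standard ``block $LDU$'' decomposition, and the only place invertibility of $D$ is used is to make $D^{-1}$ well-defined.

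First I would verify the factorization by multiplying the three matrices on the right-hand side together, block by block, and checking that the product reproduces $A$, $B$, $C$, and $D$ in the four positions; this is a short computation using $(D^{-1}C)$ and $(BD^{-1})$ and the fact that $D D^{-1} = I$. Next I would take determinants of both sides. The determinant of a product is the product of determinants, so $\det(M)$ equals the product of the three determinants on the right. The two outer factors are block-triangular with identity matrices on the diagonal, so each has determinant $1$; the middle factor is block-diagonal, so its determinant is $\det(A - BD^{-1}C)\cdot\det(D)$. Combining these gives $\det(M) = \det(D)\cdot\det(A - BD^{-1}C)$, as claimed.

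The main obstacle, such as it is, is justifying the two auxiliary determinant facts used in the last step: that a block-triangular matrix $\begin{bmatrix} I & * \\ 0 & I \end{bmatrix}$ (or its transpose) has determinant $1$, and that a block-diagonal matrix has determinant equal to the product of the determinants of its diagonal blocks. Both follow from cofactor expansion or from the Leibniz formula restricted to the nonzero entries, and since the paper already invokes \cite[\S0.8.5]{Horn-Johnson} for the Schur complement, I would simply cite the same source (or note these as standard) rather than reprove them. Everything else is routine block matrix multiplication, so the proof is short once the factorization is written down.
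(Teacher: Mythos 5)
Your proposed proof is correct: the block factorization
$$
\begin{bmatrix} A & B \\ C & D \end{bmatrix}
=
\begin{bmatrix} I & BD^{-1} \\ 0 & I \end{bmatrix}
\begin{bmatrix} A - BD^{-1}C & 0 \\ 0 & D \end{bmatrix}
\begin{bmatrix} I & 0 \\ D^{-1}C & I \end{bmatrix}
$$
does multiply out to $M$, the outer factors are unitriangular block matrices of determinant $1$, and the middle block-diagonal factor contributes $\det(A-BD^{-1}C)\cdot\det(D)$, which is exactly the claimed identity; invertibility of $D$ is used only to form $D^{-1}$, as you say. Note that the paper itself does not prove this lemma at all --- it simply cites \cite[\S0.8.5]{Horn-Johnson} --- so you have supplied an argument where the authors supplied a reference. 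The block $LDU$ decomposition you use is the standard proof (essentially the one in the cited source), and your handling of the two auxiliary facts (determinant of a unitriangular block matrix is $1$; determinant of a block-diagonal matrix is the product of the blocks' determinants) is reasonable: they are standard and may be cited or verified by the Leibniz formula, so there is no gap.
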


When $M = \begin{bmatrix} a & b \\ c & d \end{bmatrix}$ is a $2 \times 2$ matrix and $d \neq 0$, Lemma~\ref{schur-complement} simply says  $$\det(M) = d \left(a - b \cdot \frac{1}{d} \cdot c\right),$$ which is just an alternative way of writing the familiar formula for the determinant of a $2 \times 2$ matrix.

%%%%%%%%%%%%%%%%%%%%%
%%%%%%%%%%%%%%%%%%%%%
\section{Applications to Complete Graphs} \label{unweighted-apps}
%%%%%%%%%%%%%%%%%%%%%
%%%%%%%%%%%%%%%%%%%%%

In this section, we demonstrate the applicability of Lemma \ref{rank-one-update} for enumerating spanning trees in complete graphs, complete bipartite graphs, and complete multipartite graphs.  Formal definitions for each of these families of graphs will be given as we progress through this section, but examples of the complete graph $K_5$, the complete bipartite graph $K_{3,4}$, and the complete multipartite graph $K_{2,3,4}$ are shown in Figure \ref{complete-graph-examples}. 

\begin{figure}[H]
\begin{center}

\begin{tikzpicture}
\foreach \t in {0,72,144,216,288}{
\draw[fill=black] (\t:1) circle (.08);
}
\draw (0:1) -- (72:1) -- (144:1) -- (216:1) -- (288:1) -- (0:1);
\draw (0:1) -- (144:1) -- (288:1) -- (72:1) -- (216:1) -- (0:1);
\end{tikzpicture}
\qquad 
\begin{tikzpicture}
\foreach \t in {(0,1.75), (1,1.75), (2,1.75), (3,1.75), (.5,0), (1.5,0), (2.5,0)}{
\draw[fill=black] \t circle (.08);
}
\foreach \s in {(0,1.75), (1,1.75), (2,1.75), (3,1.75)}{

\foreach \t in {(.5,0), (1.5,0), (2.5,0)}{
\draw \s -- \t;
}}

\end{tikzpicture}
\qquad
\begin{tikzpicture}

\draw[fill=black] (0,0.75) circle (.08);
\draw[fill=black] (.75,0.75) circle (.08);
\draw[fill=black] (1.5,0.75) circle (.08);
\draw[fill=black] (2.25,0.75) circle (.08);

\draw[fill=black] (-0.5,2) circle (.08);
\draw[fill=black] (0,2.5) circle (.08);

\draw[fill=black] (2,2.75) circle (.08);
\draw[fill=black] (2.5,2.25) circle (.08);
\draw[fill=black] (3,1.75) circle (.08);

\foreach \r in {(0,0.75), (.75,0.75), (1.5,0.75), (2.25,0.75)}{
	\foreach \s in {(-0.5,2), (0,2.5)}{
	\draw \r -- \s;
	}
	\foreach \t in {(2,2.75), (2.5,2.25), (3,1.75)}{
	\draw \r -- \t;
	}
}
\foreach \s in {(-0.5,2), (0,2.5)}{
	\foreach \t in {(2,2.75), (2.5,2.25), (3,1.75)}{
		\draw \s -- \t;
	}
}

\end{tikzpicture}
\end{center}
\caption{The complete graph $K_5$ (left), the complete bipartite graph $K_{3,4}$ (center), and the complete multipartite graph $K_{2,3,4}$ (right).}
\label{complete-graph-examples}

\end{figure}
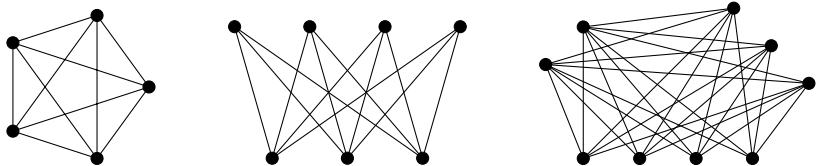

The results presented here are all known in the literature, but the proofs seem new and more elementary than the existing proofs that use Kirchoff's matrix-tree theorem.

For any positive integer $n$, the \textbf{complete graph} $K_n$ is the graph with $n$ vertices, all of which are connected by edges.  The following result, known as Cayley's formula \cite{Cayley}, was first proved by Borchardt \cite{Borchardt} and is widely studied in enumerative combinatorics---for instance, see \cite[\S5.3]{EC2} or \cite[\S33]{Aigner-Ziegler}.

\begin{proposition}
The number of spanning trees in the complete graph $K_n$ is $n^{n-2}$.  
\end{proposition}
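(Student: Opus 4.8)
The plan is to apply Lemma~\ref{rank-one-update} directly, after identifying the Laplacian of $K_n$ in a convenient closed form. Since every vertex of $K_n$ has degree $n-1$ and every pair of distinct vertices is joined by an edge, the Laplacian is $L(K_n) = (n-1)I_n - (J_n - I_n) = nI_n - J_n$, where $I_n$ is the identity matrix and $J_n = \mathbf{1}_n\mathbf{1}_n^T$ is the $n \times n$ all-ones matrix. The key observation is that adding back the rank-one matrix $J_n$ cancels the $-J_n$ term: $L(K_n) + \mathbf{1}_n\mathbf{1}_n^T = nI_n$.

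First I would take $\mathbf{u} = \mathbf{v} = \mathbf{1}_n$ in Lemma~\ref{rank-one-update}, so that $\sum_{i} u_i = \sum_i v_i = n$. The lemma then gives
\[
\det\bigl(L(K_n) + \mathbf{1}_n\mathbf{1}_n^T\bigr) = n \cdot n \cdot \tau(K_n) = n^2\,\tau(K_n).
\]
On the other hand, by the cancellation above, the left-hand side equals $\det(nI_n) = n^n$. Equating the two expressions yields $n^n = n^2\,\tau(K_n)$, and dividing by $n^2$ gives $\tau(K_n) = n^{n-2}$.

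I expect no serious obstacle here; the entire argument is a one-line application once the Laplacian is written as $nI_n - J_n$. The only point requiring a word of care is the degenerate case $n = 1$ (and perhaps $n = 2$), where dividing by $n^2$ should be checked directly: $K_1$ is a single vertex with exactly one spanning tree, matching $1^{1-2} = 1$ under the convention $0^0 = 1$, and $K_2$ has one spanning tree, matching $2^0 = 1$. For $n \geq 2$ the division is valid as written, so the formula holds for all $n \geq 1$.
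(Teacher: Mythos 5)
Your proposal is correct and matches the paper's proof essentially verbatim: both write $L(K_n) + \mathbf{1}_n\mathbf{1}_n^T = nI_n$ and apply Lemma~\ref{rank-one-update} with $\vu = \vv = \mathbf{1}_n$ to get $n^2\,\tau(K_n) = n^n$. The extra remark about small $n$ is fine but not needed.
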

\begin{proof}
The Laplacian matrix $L(K_n)$ has entries of $n-1$ on the diagonal and $-1$ in all off-diagonal entries.  Therefore, $L(K_n) + \mathbf{1}_n \mathbf{1}_n^T = nI_n$, where $I_n$ is the $n \times n$ identity matrix.  By Lemma \ref{rank-one-update}, $n^2 \tau(K_n) = \det(n I_n) = n^n$. 
\end{proof}

For every pair of positive integers $m$ and $n$, the \textbf{complete bipartite graph} $K_{m,n}$ has a vertex set partitioned into two sets, $V_1$ and $V_2$ of sizes $m$ and $n$, respectively, where every vertex in $V_1$ is connected to every vertex in $V_2$, but there are no edges among the vertices in $V_1$ or $V_2$.  The next result is a special case of Proposition~\ref{prop:multipartite}, but its statement is clearer and its proof is more straightforward, so it warrants presenting this result first.  Several proofs of this result can be found in the literature, for instance by Hartsfield and Werth \cite{Hartsfield-Werth} and Scoins \cite{Scoins}.

\begin{proposition}
The number of spanning trees in the complete bipartite graph $K_{m,n}$ is $m^{n-1}n^{m-1}$.
\end{proposition}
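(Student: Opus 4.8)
The plan is to apply Lemma~\ref{rank-one-update} with the simplest possible choice of vectors, $\mathbf{u} = \mathbf{v} = \mathbf{1}_V$, after arranging the Laplacian so that this particular rank-one update becomes block diagonal. Order the vertices so that the $m$ vertices of $V_1$ come first and the $n$ vertices of $V_2$ come second. Every vertex of $V_1$ has degree $n$, every vertex of $V_2$ has degree $m$, there are no edges inside $V_1$ or inside $V_2$, and every pair consisting of one vertex from each part is an edge; hence
$$L(K_{m,n}) = \begin{bmatrix} nI_m & -\mathbf{1}_{m,n} \\ -\mathbf{1}_{n,m} & mI_n \end{bmatrix},$$
where $\mathbf{1}_{m,n}$ is the $m \times n$ all-ones matrix. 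The all-ones matrix $\mathbf{1}_V\mathbf{1}_V^T$ has the same block shape with every block an all-ones matrix, so adding it cancels the off-diagonal blocks exactly and leaves
$$L(K_{m,n}) + \mathbf{1}_V\mathbf{1}_V^T = \begin{bmatrix} nI_m + \mathbf{1}_{m,m} & 0 \\ 0 & mI_n + \mathbf{1}_{n,n} \end{bmatrix}.$$

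I would then compute this determinant as $\det(nI_m + \mathbf{1}_{m,m}) \cdot \det(mI_n + \mathbf{1}_{n,n})$. Each factor is itself a rank-one update of a scalar multiple of an identity matrix, so the matrix determinant lemma applies again: $\det(nI_m + \mathbf{1}_m\mathbf{1}_m^T) = n^m\bigl(1 + \mathbf{1}_m^T(nI_m)^{-1}\mathbf{1}_m\bigr) = n^m\bigl(1 + \tfrac{m}{n}\bigr) = n^{m-1}(m+n)$, and symmetrically $\det(mI_n + \mathbf{1}_{n,n}) = m^{n-1}(m+n)$. (Alternatively, since $\mathbf{1}_{k,k}$ has eigenvalue $k$ once and $0$ with multiplicity $k-1$, the matrix $nI_m + \mathbf{1}_{m,m}$ has eigenvalue $m+n$ once and $n$ with multiplicity $m-1$, giving the same value.) Multiplying the two factors yields $\det\bigl(L(K_{m,n}) + \mathbf{1}_V\mathbf{1}_V^T\bigr) = (m+n)^2\, m^{n-1} n^{m-1}$.

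Finally I would invoke Lemma~\ref{rank-one-update} with $\mathbf{u} = \mathbf{v} = \mathbf{1}_V$: its left-hand side is the determinant just computed, while its right-hand side is $\bigl(\sum_{i \in V} 1\bigr)^2 \tau(K_{m,n}) = (m+n)^2 \tau(K_{m,n})$. Equating the two and dividing by $(m+n)^2$ gives $\tau(K_{m,n}) = m^{n-1} n^{m-1}$, as claimed. I do not expect a genuine obstacle here; the only points to be careful about are that the off-diagonal Laplacian blocks are precisely $-\mathbf{1}_{m,n}$ and $-\mathbf{1}_{n,m}$, so that $\mathbf{1}_V\mathbf{1}_V^T$ annihilates them exactly, and that $m+n \neq 0$ so the final division is valid — both of which are immediate.
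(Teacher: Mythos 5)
Your proof is correct, but it takes a slightly different route from the paper's. The paper chooses the asymmetric rank-one update $L(K_{m,n}) + \mathbf{1}_{V_2}\mathbf{1}_{V_1}^T$, which kills only the lower-left block and produces a block \emph{upper-triangular} matrix $\begin{bmatrix} nI_m & -\mathbf{1}_{m,n} \\ \mathbf{0}_{n,m} & mI_n \end{bmatrix}$ whose diagonal blocks are scalar; the determinant $n^m m^n$ is then read off immediately, and Lemma~\ref{rank-one-update} gives $mn\cdot\tau(K_{m,n}) = n^m m^n$ with no further computation. You instead add the full all-ones matrix $\mathbf{1}_V\mathbf{1}_V^T$, which symmetrically cancels both off-diagonal blocks and yields a block \emph{diagonal} matrix, but at the cost that each diagonal block $nI_m + \mathbf{1}_{m,m}$, $mI_n + \mathbf{1}_{n,n}$ is no longer scalar and needs a second application of the matrix determinant lemma (or the eigenvalue argument you give), with the larger factor $(m+n)^2$ cancelling at the end rather than $mn$. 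Your version is in fact exactly the specialization to two parts of the paper's proof of Proposition~\ref{prop:multipartite} for complete multipartite graphs, so it has the advantage of generalizing directly, whereas the paper's choice of $\mathbf{1}_{V_2}\mathbf{1}_{V_1}^T$ is tailored to the bipartite case and is the more economical of the two (one lemma application, determinant immediate). All of your computations check out, including the cancellation of the off-diagonal blocks and the nonvanishing of $m+n$.
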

\begin{proof}
Partition the vertex set of $K_{m,n}$ as $V_1 \sqcup V_2$ with $|V_1| = m$ and $|V_2| = n$ so the Laplacian matrix of $K_{m,n}$ has the block form 
$$
L(K_{m,n}) = \begin{bmatrix} nI_m & -\mathbf{1}_{m,n} \\ -\mathbf{1}_{n,m} & mI_n \end{bmatrix}.
$$
Let $\mathbf{1}_{V_1} \in \mathbb{R}^{m+n}$ be the indicator vector for vertices in $V_1$, meaning each entry indexed by a vertex in $V_1$ is $1$ and all other entries are $0$.  Similarly, let $\mathbf{1}_{V_2}$ be the indicator vector for vertices in $V_2$. Then $L(K_{m,n}) + \mathbf{1}_{V_2}\mathbf{1}_{V_1}^T$ has the upper-triangular block form 
$$
L(K_{m,n}) + \mathbf{1}_{V_2}\mathbf{1}_{V_1}^T = \begin{bmatrix} nI_m & -\mathbf{1}_{m,n} \\ \mathbf{0}_{n,m} & mI_n \end{bmatrix}.
$$
Therefore, by Lemma \ref{rank-one-update}, we have
\begin{eqnarray*}
m \cdot n \cdot \tau(K_{m,n}) &=& 
|V_1|\cdot|V_2| \cdot \tau(K_{m,n}) \\[0.5em] &=& \det\left(L(K_{m,n}) + \mathbf{1}_{V_2}\mathbf{1}_{V_1}^T \right) \\[0.5em]
&=& \det(nI_m) \cdot \det(mI_n) \\[0.5em]
 &=& n^m \cdot n^m.
\end{eqnarray*}
\end{proof}

For every $k$-tuple of positive integers $n_1,\ldots,n_k$, the \textbf{complete multipartite graph} $K_{n_1,\ldots,n_k}$ has a vertex set that can be partitioned into $k$ disjoint sets $V_1, \ldots, V_k$ with $|V_i| = n_i$ for $i \in \{1,\ldots,k\}$ such that $v \in V_i$ and $w \in V_j$ form an edge in $G$ if and only if $i \neq j$.  The following generalization of Cayley's formula has several different proofs by Austin \cite{Austin}, Lewis \cite{Lewis}, and Onodera \cite{Onodera}.  

\begin{proposition}\label{prop:multipartite} 
Let $n_1,\ldots,n_k$ be positive integers and let $n = n_1+\cdots+n_k$.  The number of spanning trees in the complete multipartite graph $K_{n_1,\ldots,n_k}$ is given by   $$\tau(K_{n_1,n_2,\ldots,n_k}) = n^{k-2} \prod_{i=1}^k (n-n_i)^{n_i-1}.$$
\end{proposition}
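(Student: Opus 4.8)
The plan is to apply Lemma~\ref{rank-one-update} with $\mathbf{u} = \mathbf{v} = \mathbf{1}_V$, the all-ones vector on $V = V(K_{n_1,\ldots,n_k})$, exactly as in the complete and complete bipartite cases, but now exploiting the block structure coming from the partition $V = V_1 \sqcup \cdots \sqcup V_k$. First I would record that every vertex in $V_i$ has degree $n - n_i$, so if we view $L = L(K_{n_1,\ldots,n_k})$ as a $k \times k$ array of blocks with the $(i,j)$ block of size $n_i \times n_j$, then the diagonal block $(i,i)$ equals $(n-n_i)I_{n_i}$ and each off-diagonal block $(i,j)$ equals $-\mathbf{1}_{n_i,n_j}$. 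We may assume $k \geq 2$, since otherwise $K_{n_1,\ldots,n_k}$ has no edges and the claimed formula is immediate.

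The key observation is that adding $\mathbf{1}_V\mathbf{1}_V^T$ cancels every off-diagonal block: the $(i,j)$ block of $\mathbf{1}_V\mathbf{1}_V^T$ is precisely $\mathbf{1}_{n_i,n_j}$, so $L + \mathbf{1}_V\mathbf{1}_V^T$ is block diagonal with $i$th diagonal block $(n-n_i)I_{n_i} + \mathbf{1}_{n_i}\mathbf{1}_{n_i}^T$. Hence $\det(L + \mathbf{1}_V\mathbf{1}_V^T) = \prod_{i=1}^k \det\!\left((n-n_i)I_{n_i} + \mathbf{1}_{n_i}\mathbf{1}_{n_i}^T\right)$. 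Since $k \geq 2$ we have $n - n_i \geq 1$, so each factor is a rank-one update of an invertible diagonal matrix, and the second form of the matrix determinant lemma gives $\det\!\left((n-n_i)I_{n_i} + \mathbf{1}_{n_i}\mathbf{1}_{n_i}^T\right) = (n-n_i)^{n_i}\left(1 + \tfrac{n_i}{n-n_i}\right) = n\,(n-n_i)^{n_i-1}$.

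Multiplying the $k$ factors yields $\det(L + \mathbf{1}_V\mathbf{1}_V^T) = n^k \prod_{i=1}^k (n-n_i)^{n_i-1}$. On the other hand $\mathbf{1}_V^T\mathbf{1}_V = n$, so Lemma~\ref{rank-one-update} gives $\det(L + \mathbf{1}_V\mathbf{1}_V^T) = n^2\,\tau(K_{n_1,\ldots,n_k})$. Equating these two expressions and dividing by $n^2$ produces the stated formula.

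There is no real obstacle: the only genuine idea is noticing that the all-ones rank-one perturbation annihilates the off-diagonal blocks and thereby block-diagonalizes $L + \mathbf{1}_V\mathbf{1}_V^T$; after that the computation is the same two-line determinant evaluation used for $K_n$, carried out once per part. One could instead strip off a single part at a time using the Schur complement of Lemma~\ref{schur-complement} and induct on $k$, but the direct approach avoids that bookkeeping.
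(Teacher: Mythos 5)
Your proof is correct and follows essentially the same route as the paper: add $\mathbf{1}_V\mathbf{1}_V^T$ to block-diagonalize the Laplacian, evaluate each diagonal block $(n-n_i)I_{n_i} + \mathbf{1}_{n_i}\mathbf{1}_{n_i}^T$ via the matrix determinant lemma to get $n\,(n-n_i)^{n_i-1}$, and divide by $n^2$ using Lemma~\ref{rank-one-update}. Your explicit handling of the $k=1$ case and the invertibility of $(n-n_i)I_{n_i}$ is a small tidiness bonus over the paper's write-up.
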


\begin{proof}
For simplicity, let $K = K_{n_1,n_2,\ldots,n_k}$ and let $V = V(K) = \{v_1,v_2,\ldots,v_n\}$ be ordered so that $v_1,\ldots,v_{n_1}$ make up $V_1$, $v_{n_1+1},\ldots,v_{n_1+n_2}$ make up $V_2$, and so on.  Then the Laplacian matrix of $K$ has blocks of the form $(n-n_i)I_{n_i}$ on its diagonal and all other entries equal to $-1$.  This means $L(K) + \mathbf{1}_{V, V} = L(K) + \mathbf{1}_V \mathbf{1}_{V^T}$ is a block diagonal matrix whose diagonal blocks have the form $(n-n_i)I_{n_i} + \mathbf{1}_{n_i}\mathbf{1}_{n_i}^T$.  By the matrix determinant lemma, 
\begin{eqnarray*}
\det\left((n-n_i)I_{n_i} + \mathbf{1}_{n_i}\mathbf{1}_{n_i}^T\right) &=& \det\left((n-n_i)I_{n_i}\right) \left(1 + \mathbf{1}_{n_i}^T \left((n-n_i)I_{n_i}\right)^{-1} \mathbf{1}_{n_i}\right) \\[0.5em]
&=& (n-n_i)^{n_i} \left(1 + \frac{1}{n-n_i} \mathbf{1}_{n_i}^T \mathbf{1}_{n_i}\right) \\[0.5em]
&=& (n-n_i)^{n_i} \left(1 + \frac{n_i}{n-n_i} \right) \\[0.5em]
&=& (n-n_i)^{n_i-1}\cdot n.
\end{eqnarray*}
Therefore, by Lemma \ref{rank-one-update}, 
\begin{eqnarray*}
n^2 \tau(G) &=& \det\left(L(K) + \mathbf{1}_V \mathbf{1}_{V^T}\right) \\[0.5em]
&=& \prod_{i=1}^k \det\left((n-n_i)I_{n_i}\right) \\[0.5em]
&=& \prod_{i=1}^k \left( (n-n_i)^{n_i-1}\cdot n \right)\\[0.5em]
&=& n^k  \prod_{i=1}^k  (n-n_i)^{n_i-1}.
\end{eqnarray*}
\end{proof}

%%%%%%%%%%%%%%%%%%%%%
%%%%%%%%%%%%%%%%%%%%%
\section{Application to Ferrers graphs}
%%%%%%%%%%%%%%%%%%%%%
%%%%%%%%%%%%%%%%%%%%%

In this section, we demonstrate the applicability of Lemmas \ref{rank-one-update} and \ref{schur-complement} for enumerating spanning trees in  a family of bipartite graphs corresponding to integer partitions called \emph{Ferrers graphs}.  

A \textbf{partition} of a positive integer $s$ is a weakly decreasing list of positive integers that sum to $s$.  For example, $(4,4,3,2,1)$ is a partition of $14$.  We write $\lambda = (\lambda_1, \ldots, \lambda_m)$ to denote the parts of a partition $\lambda$.  To any partition, there is an associated \textbf{Ferrers diagram}, which is a stack of left-justified boxes with $\lambda_1$ boxes in the first row, $\lambda_2$ boxes in the second row, and so on. Finally, to any Ferrers diagram there is an associated \textbf{Ferrers graph}, whose vertices are indexed by the rows and columns of the Ferrers diagram with an edge if there is a box in the corresponding position.  The Ferrers diagram and corresponding Ferrers graph associated to the partition $\lambda = (4,4,3,2,1)$ are shown in Figure~\ref{example-ferrers}.

\begin{figure}[H]
\begin{tabular}{>{\centering\arraybackslash}m{.45\textwidth}>{\centering\arraybackslash}m{.45\textwidth}}
\begin{tikzpicture}[scale=.6]
\draw (0,0) -- (4,0);
\draw (0,-1) -- (4,-1);
\draw (0,-2) -- (4,-2);
\draw (0,-3) -- (3,-3);
\draw (0,-4) -- (2,-4);
\draw (0,-5) -- (1,-5);

\draw (0,0) -- (0,-5);
\draw (1,0) -- (1,-5);
\draw (2,0) -- (2,-4);
\draw (3,0) -- (3,-3);
\draw (4,0) -- (4,-2);

\draw (-.5,-.5) node {$r_1$};
\draw (-.5,-1.5) node {$r_2$};
\draw (-.5,-2.5) node {$r_3$};
\draw (-.5,-3.5) node {$r_4$};
\draw (-.5,-4.5) node {$r_5$};

\draw (0.5,0.5) node {$c_1$};
\draw (1.5,0.5) node {$c_2$};
\draw (2.5,0.5) node {$c_3$};
\draw (3.5,0.5) node {$c_4$};
\end{tikzpicture}
&
\begin{tikzpicture}[scale=1]
\foreach \t in {(0,0), (1,0), (2,0), (3,0), (4,0), (.5,1.5), (1.5,1.5), (2.5,1.5), (3.5,1.5)}{
	\draw[fill=black] \t circle (.1);
}
\draw (0,0) node[anchor = north] {$r_1$};
\draw (1,0) node[anchor = north] {$r_2$}; 
\draw (2,0)  node[anchor = north] {$r_3$};
\draw (3,0)  node[anchor = north] {$r_4$};
\draw (4,0)  node[anchor = north] {$r_5$};
\draw (.5,1.5)  node[anchor = south] {$c_1$};
\draw (1.5,1.5) node[anchor = south] {$c_2$}; 
\draw (2.5,1.5) node[anchor = south] {$c_3$};
\draw (3.5,1.5) node[anchor = south] {$c_4$};

\draw (0,0) -- (.5,1.5);
\draw (0,0) -- (1.5,1.5);
\draw (0,0) -- (2.5,1.5);
\draw (0,0) -- (3.5,1.5);

\draw (1,0) -- (.5,1.5);
\draw (1,0) -- (1.5,1.5);
\draw (1,0) -- (2.5,1.5);
\draw (1,0) -- (3.5,1.5);

\draw (2,0) -- (.5,1.5);
\draw (2,0) -- (1.5,1.5);
\draw (2,0) -- (2.5,1.5);

\draw (3,0) -- (.5,1.5);
\draw (3,0) -- (1.5,1.5);

\draw (4,0) -- (.5,1.5);

\end{tikzpicture}
\end{tabular}
\caption{The Ferrers diagram (left) and Ferrers graph (right) corresponding to the partition $(4,4,3,2,1)$.}
\label{example-ferrers}
\end{figure}
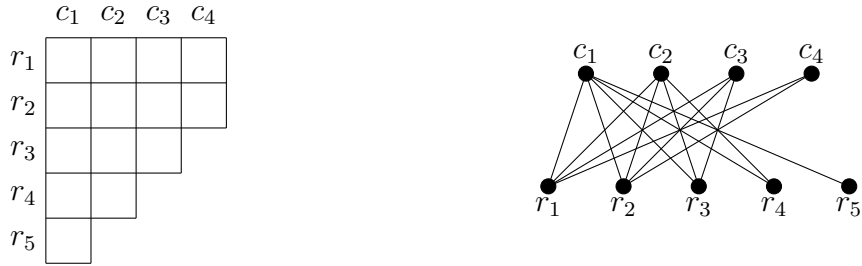

Equivalently,  a \textbf{Ferrers graph} is a bipartite graph $G$ whose vertices can be partitioned as $R \sqcup C$ with $R = \{r_1,\ldots,r_m\}$ (corresponding to the rows of the Ferrers diagram) and $C = \{c_1,\ldots,c_n\}$ (corresponding to the columns) such that 
\begin{enumerate}
\item if $\{r_k,c_{\ell}\}$ is an edge in $G$, then so is $\{r_i,c_j\}$ for any $i \leq k$ and $j \leq \ell$, and
\item $\{r_1,c_n\}$ and $\{r_m,c_1\}$ are edges in $G$.
\end{enumerate}

Ehrenborg and van Willigenburg \cite[Theorem 2.1]{E-VW} found the following beautiful formula counting the number of spanning trees in a Ferrers graph.  

\begin{theorem}\label{thm:EVW}
Let $G$ be a Ferrers graph whose vertices are partitioned as $V(G) = R \sqcup C$ with $R = \{r_1,\ldots,r_m\}$ and $C = \{c_1,\ldots,c_n\}$.  Then $$\tau(G) = \frac{\prod_{v \in V(G)} \deg(v)}{mn} = \prod_{i=2}^m \deg(r_i) \prod_{j=2}^n \deg(c_j).$$
\end{theorem}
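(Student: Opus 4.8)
The plan is to use Lemmas~\ref{rank-one-update} and~\ref{schur-complement} to collapse $\tau(G)$ into a single structured determinant, and then to evaluate that determinant by a change of basis followed by a short recurrence. Since $G$ is bipartite with parts $R$ and $C$, its Laplacian has the block form
$$L=\begin{bmatrix} D_R & -B\\ -B^{T} & D_C\end{bmatrix},$$
where $D_R=\operatorname{diag}(\deg(r_1),\dots,\deg(r_m))$ and $D_C=\operatorname{diag}(\deg(c_1),\dots,\deg(c_n))$ are diagonal and $B$ is the $m\times n$ biadjacency matrix of $G$. The defining Ferrers property is exactly that $B_{ij}=1$ if and only if $j\le\lambda_i$, so $B$ is the staircase matrix of the partition $\lambda=(\lambda_1,\dots,\lambda_m)$; in particular $\deg(r_i)=\lambda_i$ and $\deg(c_j)=\lambda'_j$, the number of parts of $\lambda$ of size at least $j$, and because $r_1$ and $c_1$ are adjacent to everything on the opposite side, $\deg(r_1)=n$ and $\deg(c_1)=m$.

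First we apply Lemma~\ref{rank-one-update} with $\mathbf{u}=\mathbf{v}=\mathbf{1}_R$, the indicator vector of the row-vertices; this adds the all-ones block $\mathbf{1}_m\mathbf{1}_m^{T}$ to the upper-left block of $L$, so that $m^{2}\tau(G)=\det\!\begin{bmatrix} D_R+\mathbf{1}_m\mathbf{1}_m^{T} & -B\\ -B^{T} & D_C\end{bmatrix}$. Since $D_C$ is an invertible diagonal block, Lemma~\ref{schur-complement} then gives
$$m^{2}\tau(G)=\det(D_C)\cdot\det\!\big(D_R+\mathbf{1}_m\mathbf{1}_m^{T}-BD_C^{-1}B^{T}\big).$$
The staircase shape of $B$ makes the correction term explicit: $(BD_C^{-1}B^{T})_{ik}=\sum_{j\le\min(\lambda_i,\lambda_k)}\deg(c_j)^{-1}$, and since $\lambda$ is weakly decreasing, $\min(\lambda_i,\lambda_k)=\lambda_{\max(i,k)}$, so this entry equals $v_{\max(i,k)}$ where $v_i:=\sum_{j=1}^{\lambda_i}\deg(c_j)^{-1}$. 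One further ingredient is the identity $i\,(v_i-v_{i+1})=\lambda_i-\lambda_{i+1}$, with the convention $v_{m+1}:=0$: indeed $v_i-v_{i+1}=\sum_{\lambda_{i+1}<j\le\lambda_i}(\lambda'_j)^{-1}$, and each such column $j$ has a box in precisely the rows $1,\dots,i$, so $\lambda'_j=i$.

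It then remains to show that $\det\!\big(D_R+\mathbf{1}_m\mathbf{1}_m^{T}-(v_{\max(i,k)})_{i,k}\big)=m\prod_{i=2}^{m}\deg(r_i)$; together with $\det(D_C)=m\prod_{j=2}^{n}\deg(c_j)$ this yields the theorem. To evaluate this determinant, let $U$ be the $m\times m$ unipotent upper-triangular all-ones matrix. Then $(v_{\max(i,k)})_{i,k}=U\operatorname{diag}(v_1-v_2,\dots,v_{m-1}-v_m,v_m)\,U^{T}$ and $\mathbf{1}_m=U\mathbf{e}_m$, so conjugating by $U$ (which has determinant $1$) replaces this matrix by $U^{-1}D_R(U^{-1})^{T}+\mathbf{e}_m\mathbf{e}_m^{T}-\operatorname{diag}(v_i-v_{i+1})$. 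Because $U^{-1}$ is bidiagonal, $U^{-1}D_R(U^{-1})^{T}$ is tridiagonal with diagonal $\lambda_i+\lambda_{i+1}$ and off-diagonal $-\lambda_{i+1}$ (convention $\lambda_{m+1}=0$); using the corner identity, the matrix $A:=U^{-1}D_R(U^{-1})^{T}-\operatorname{diag}(v_i-v_{i+1})$ is tridiagonal with off-diagonal $-\lambda_{i+1}$ and diagonal entries $\big((i-1)\lambda_i+(i+1)\lambda_{i+1}\big)/i$. A direct check shows $A\,(1,2,\dots,m)^{T}=\mathbf{0}$, hence $\det A=0$, and the matrix determinant lemma then gives $\det\!\big(A+\mathbf{e}_m\mathbf{e}_m^{T}\big)=\det A+\det A^{(m)}=\det A^{(m)}$, where $A^{(m)}$ is the leading $(m-1)\times(m-1)$ principal submatrix of $A$. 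Finally, the standard three-term recurrence for tridiagonal determinants gives, by a one-line induction on $m$, that $\det A^{(m)}=m\prod_{i=2}^{m}\lambda_i$.

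The hard part will be this determinant evaluation. Everything up to the reduction is bookkeeping, but $BD_C^{-1}B^{T}$ is a dense matrix with no block structure to exploit and cannot be triangularized directly; the conjugation by $U$ (which turns the $\max$-matrix into a tridiagonal one), the explicit kernel vector $(1,2,\dots,m)$, and the corner identity --- without which the fractions along the diagonal of $A$ would not collapse --- are the three points that carry the argument through.
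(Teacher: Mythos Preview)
Your argument is correct, but it takes a substantially longer route than the paper's. The divergence occurs at the very first step: you perturb the Laplacian by the \emph{symmetric} rank-one matrix $\mathbf{1}_R\mathbf{1}_R^{T}$, whereas the paper perturbs by the \emph{asymmetric} matrix $\mathbf{1}_C\mathbf{1}_R^{T}$. With the paper's choice, Lemma~\ref{rank-one-update} gives the factor $mn$ directly, and the lower-left block of $L$ becomes $B^{op}=B^{T}+\mathbf{1}_{n,m}$, which records \emph{non}-edges. The Schur complement $S=D_R-BD_C^{-1}B^{op}$ then has $(r_i,r_j)$-entry $\sum_{c\in N(r_i)\setminus N(r_j)}1/\deg(c)$, and because the neighborhoods in a Ferrers graph are nested, $N(r_i)\setminus N(r_j)=\emptyset$ for $i>j$; so $S$ is already upper triangular with diagonal $\deg(r_i)$, and the theorem falls out in one line.

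Your symmetric perturbation forfeits this: $BD_C^{-1}B^{T}$ is the dense ``max-matrix'' $(v_{\max(i,k)})$, and you have to work hard to evaluate its determinant --- the conjugation by $U$, the corner identity $i(v_i-v_{i+1})=\lambda_i-\lambda_{i+1}$, the kernel vector $(1,2,\dots,m)^{T}$, a second use of the matrix determinant lemma, and the tridiagonal recurrence are all genuinely needed to get there. These steps are all valid (the inductive claim for the tridiagonal minors is $P_k=(k+1)\lambda_{k+1}\prod_{i=2}^{k}\lambda_i$, which does go through), so the proof stands. But your remark that the Schur complement ``cannot be triangularized directly'' is precisely the point on which the paper improves: a different choice of $\mathbf{u},\mathbf{v}$ in Lemma~\ref{rank-one-update} makes it triangular immediately. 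On the other hand, your factorization $(v_{\max(i,k)})=U\operatorname{diag}(v_i-v_{i+1})U^{T}$ and the explicit kernel are nice self-contained tools that do not rely on the asymmetric trick.
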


We will give a new proof of this result using Lemmas \ref{rank-one-update} and \ref{schur-complement}.  To begin, let us explore some general properties of Laplacians of bipartite graphs through the lens of Lemma~\ref{rank-one-update}. 

Let $G$ be a connected bipartite graph whose vertices are partitioned as $V(G) = R \sqcup C$.  The Laplacian matrix of $G$ can be partitioned as a block matrix with the form 

$$
L(G) = \begin{bmatrix} D_R & B \\ B^T & D_C\end{bmatrix},
$$ \smallskip

where $D_R$ (respectively, $D_C$) is an $m \times m$ (respectively, $n \times n$) diagonal matrix whose diagonal entries encode the degrees of the vertices in $R$ (respectively, $C$).  The matrix $B$ is an $m \times n$ matrix with rows indexed by vertices $r \in R$, columns indexed by vertices $c \in C$, and $B(r,c) = -1$ if $\{r,c\}$ is an edge and a $0$ otherwise. 

Now we consider a rank-one update to the Laplacian matrix that will be useful in interpreting Theorem \ref{thm:EVW}.  Consider the matrix 

$$
M(G) = L(G) + \mathbf{1}_C \mathbf{1}_R^T = L(G) + \begin{bmatrix} \mathbf{0}_{m,m} & \mathbf{0}_{m,n} \\ \mathbf{1}_{n,m} & \mathbf{0}_{n,n} \end{bmatrix},
$$ \smallskip

where $\mathbf{1}_R$ and $\mathbf{1}_C$ are the indicator vectors for $R$ and $C$ in $\mathbb{R}^{V(G)}$.  We can decompose $M(G)$  as 

$$M(G) = \begin{bmatrix} D_R & B \\ B^{op} & D_C \end{bmatrix},$$ \smallskip

where $D_R$, $B$, and $D_C$ are defined as in $L(G)$ and $B^{op} = B^T + \mathbf{1}_{n,m}$.  As a consequence of Lemma \ref{rank-one-update}, $\det(M(G)) = mn \cdot \tau(G)$.

On the other hand, $B(r,c) = -1$ if and only if $\{r,c\} \in E(G)$, $B^{op}(c,r') = 1$ if and only if $\{r',c\} \notin E(G)$, and the entries of $B$ and $B^{op}$ equal zero otherwise. Applying Lemma~\ref{schur-complement} to $M(G)$ and $D_C$, we see that 

$$
\det(M(G)) = \det(D_C) \cdot \det(D_R - BD_C^{-1}B^{op}).
$$ \smallskip

We can compute the entries of $S: = D_R - BD_C^{-1}B^{op}$ explicitly. Let $r$ and $r'$ be elements of $R$ that are not necessarily distinct.  The row indexed by $r$ in $B$ has a nonzero entry of $-1$ for each $c \in C$ that is a neighbor of $r$ and the column indexed by $r'$ in $D_C^{-1}B^{op}$ has a nonzero entry of $\frac{1}{\deg(c)}$ for each $c \in C$ that is not a neighbor of $r'$.  Therefore, the entry in row $r$ and column $r'$ of $BD_C^{-1}B^{op}$ is equal to $\sum \frac{-1}{\deg(c)}$, where the sum is over all $c \in C$ that are incident with $r$, but not $r'$. Consequently, the entries on the diagonal of $BD_C^{-1}B^{op}$ are all zero. This proves the following proposition.
\begin{proposition} \label{bipartite-S-matrix}
Let $G$ be a bipartite graph whose vertex set is partitioned as $R \sqcup C$.  Then 
$$
mn \cdot \tau(G) = \prod_{c \in C} \deg(c) \cdot \det(S),
$$
where $S$ is the $m \times m$ matrix with entries given by
$$
S(r,r') = \begin{cases}
\deg(r) & \text{ if } r = r', \\
\displaystyle \sum_{c \in N(r) \setminus N(r')} \frac{1}{\deg(c)} & \text{ otherwise,}
\end{cases}
$$
where $N(v)$ denotes the set of neighbors of a vertex $v$. \qed
\end{proposition}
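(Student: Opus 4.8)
The plan is to assemble Proposition~\ref{bipartite-S-matrix} directly from the two ingredients the excerpt has already set up: Lemma~\ref{rank-one-update} applied to the rank-one update $\mathbf{1}_C\mathbf{1}_R^T$, and Lemma~\ref{schur-complement} applied to the block matrix $M(G)$ with the invertible diagonal block $D_C$. The only genuine work is to verify that $D_C$ really is invertible and that the Schur complement $S = D_R - BD_C^{-1}B^{op}$ has the stated entries; everything else is bookkeeping.

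First I would record that $\det(M(G)) = mn\cdot\tau(G)$, which is immediate from Lemma~\ref{rank-one-update} with $\mathbf{u}=\mathbf{1}_C$ and $\mathbf{v}=\mathbf{1}_R$, since $\sum_i u_i = |C| = n$ and $\sum_i v_i = |R| = m$. Next I would note that $D_C$ is a diagonal matrix whose entries are the degrees $\deg(c)$ for $c\in C$, and that connectedness of $G$ forces every such degree to be positive; hence $D_C$ is invertible with $D_C^{-1} = \operatorname{diag}(1/\deg(c))_{c\in C}$. Lemma~\ref{schur-complement} then gives $\det(M(G)) = \det(D_C)\cdot\det(S) = \left(\prod_{c\in C}\deg(c)\right)\det(S)$, and combining with the previous identity yields the displayed formula $mn\cdot\tau(G) = \prod_{c\in C}\deg(c)\cdot\det(S)$.

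It remains to compute $S(r,r')$. The diagonal part of $S$ comes from $D_R$, which contributes $\deg(r)$ on the diagonal and nothing off-diagonal, so I only need the entries of $BD_C^{-1}B^{op}$. For this I would expand the matrix product entrywise: the $(r,r')$ entry of $BD_C^{-1}B^{op}$ is $\sum_{c\in C} B(r,c)\cdot\frac{1}{\deg(c)}\cdot B^{op}(c,r')$. Since $B(r,c) = -1$ exactly when $c\in N(r)$ and is $0$ otherwise, while $B^{op}(c,r') = 1$ exactly when $c\notin N(r')$ and is $0$ otherwise, the only surviving terms are those $c$ with $c\in N(r)$ and $c\notin N(r')$, i.e.\ $c\in N(r)\setminus N(r')$, each contributing $-1/\deg(c)$. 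Thus the $(r,r')$ entry of $BD_C^{-1}B^{op}$ is $-\sum_{c\in N(r)\setminus N(r')}\frac{1}{\deg(c)}$, and therefore $S(r,r') = D_R(r,r') - \bigl(BD_C^{-1}B^{op}\bigr)(r,r')$ equals $\deg(r)$ when $r=r'$ (note $N(r)\setminus N(r) = \emptyset$, so the correction vanishes on the diagonal, consistent with the excerpt's remark that $BD_C^{-1}B^{op}$ has zero diagonal) and equals $\sum_{c\in N(r)\setminus N(r')}\frac{1}{\deg(c)}$ when $r\neq r'$.

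I do not anticipate a serious obstacle: the argument is a routine unwinding of the block structure, and the excerpt has essentially narrated it already. The one point that deserves an explicit sentence rather than being taken for granted is the invertibility of $D_C$ — it relies on $G$ being connected (or at least having no isolated column-vertices), which is exactly the hypothesis under which $\tau(G)$ and the matrix-tree theorem are being used. The sign bookkeeping in the off-diagonal entries (the two minus signs from $B$ and the plus sign from $B^{op}$, together with the overall minus in $D_R - BD_C^{-1}B^{op}$) is the only place a careless slip could occur, so I would write that computation out in full.
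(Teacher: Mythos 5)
Your proposal is correct and follows essentially the same route as the paper, whose proof of Proposition~\ref{bipartite-S-matrix} is exactly the preceding discussion: apply Lemma~\ref{rank-one-update} to $M(G)=L(G)+\mathbf{1}_C\mathbf{1}_R^T$, apply Lemma~\ref{schur-complement} with the diagonal block $D_C$, and compute the entries of $BD_C^{-1}B^{op}$ termwise. Your explicit remarks on the invertibility of $D_C$ and the sign bookkeeping are sound refinements of details the paper leaves implicit.
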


We can now give a simple proof of Theorem \ref{thm:EVW}.

\begin{proof}[Proof of Theorem~\ref{thm:EVW}]
Let $G$ be a Ferrers graph as described in the theorem statement and observe that the vertices in $R$ are naturally ordered such that $N(r_1) \supseteq N(r_2) \supseteq \cdots \supseteq N(r_m)$.  This means $N(r_i) \setminus N(r_j) = \emptyset$ for every $i > j$, and hence the corresponding entries of the $S$ matrix in Proposition~\ref{bipartite-S-matrix} satisfy $S(r_i,r_j) = 0$.  Thus, the $S$ matrix is upper triangular with diagonal entries $\deg(r)$ for $r \in R$, which means $\det(S) = \prod_{r \in R} \deg(r)$.  The result follows by Proposition \ref{bipartite-S-matrix}.
\end{proof}

%%%%%%%%%%%%%%%%%%%%%
%%%%%%%%%%%%%%%%%%%%%
\section{Application to Threshold graphs}
%%%%%%%%%%%%%%%%%%%%%
%%%%%%%%%%%%%%%%%%%%%

In this section, we demonstrate the applicability of Lemmas \ref{rank-one-update} and \ref{schur-complement} for enumerating spanning trees in an important and well-studied family of graphs called \emph{threshold graphs}; see \cite{Mahadev-Peled}.

A graph $G$ on $n$ vertices is a \textbf{threshold graph} if its vertices can be ordered $v_1,\ldots,v_n$ in such a way that if $\{v_k,v_\ell\} \in E(G)$ for some $1 \leq k < \ell \leq n$, then $\{v_i,v_\ell\} \in E(G)$ whenever $i < k$ and $\{v_k,v_j\} \in E(G)$ for all $j < \ell$. For a threshold graph $G$, we consider a special vertex $v_t$, where $t$ is taken to be the largest index such that $\{v_i,v_t\} \in E(G)$ for all $i < t$. Because $G$ is threshold, this means $\{v_i,v_j\} \in E(G)$ for all $1 \leq i<j \leq t$.\footnote{For readers who are familiar with the definition of threshold graphs starting from an initial vertex and inductively adding dominating or isolated vertices, the vertex $v_t$ in our definition is the initial vertex, the vertices $v_1,\ldots,v_{t-1}$ are the dominating vertices in $G$ (where $v_{t-1}$ is the first dominating vertex, $v_{t-2}$ the second, and so on), and the vertices $v_{t+1}, \ldots, v_n$ are the isolated vertices (where $v_{t+1}$ is the first isolated vertex, $v_{t+2}$ the second, and so on).} An example threshold graph is illustrated in Figure~\ref{example-threshold}.

\begin{figure}[H]
\begin{center}
\scalebox{0.9}{
\begin{tikzpicture}
\draw [fill] (0,2.4) circle [radius=0.12];
\node [above left] at (0,2.4) {$6$};
\draw [fill] (1.6,2.4) circle [radius=0.12];
\node [above right] at (1.6,2.4) {$1$};
\draw [fill] (2.6,1.2) circle [radius=0.12];
\node [right] at (2.6,1.2) {$ \, 2$};
\draw [fill] (1.6,0) circle [radius=0.12];
\node [below right] at (1.6,0) {$3$};
\draw [fill] (0,0) circle [radius=0.12];
\node [below left] at (0,0) {$4$};
\draw [fill] (-1,1.2) circle [radius=0.12];
\node [left] at (-1,1.2) {$5 \, $};
\draw [thick] (1.6,2.4) -- (2.6,1.2);
\draw [thick] (1.6,2.4) -- (1.6,0);
\draw [thick] (1.6,2.4) -- (0,0);
\draw [thick] (1.6,2.4) -- (-1,1.2);
\draw [thick] (1.6,2.4) -- (0,2.4);
\draw [thick] (2.6,1.2) -- (1.6,0);
\draw [thick] (2.6,1.2) -- (0,0);
\draw [thick] (2.6,1.2) -- (-1,1.2);
\draw [thick] (2.6,1.2) -- (0,2.4);
\draw [thick] (1.6,0) -- (0,0);
\draw [thick] (1.6,0) -- (-1,1.2);

\end{tikzpicture}}
\end{center}
\caption{An example threshold graph on six vertices with $t = 4$.}
\label{example-threshold}
\end{figure}
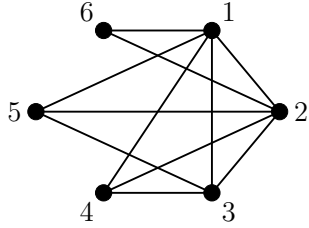

In this article, we are only interested in connected threshold graphs since disconnected graphs do not have any spanning trees.  An important property of threshold graphs is that a threshold graph is connected if and only if its highest index vertex, $v_n$, is not an isolated vertex.  In other words, as long as $v_n$ has neighbors, the graph is connected.

As with Ferrers graphs, spanning trees in threshold graphs can also be counted by a beautiful formula, which appears several places in the literature, including proofs by Chestnut and Fishkind \cite{Chestnut-Fishkind}, Hammer and Kelmans \cite{Hammer-Kelmans}, and Merris \cite{Merris}.    

\begin{theorem}
Let $G$ be a connected threshold graph on $n$ vertices, and let $t \leq n$ be the largest index such that $\{v_i,v_t\} \in E(G)$ for all $i < t$.  Then 
\begin{equation} \label{threshold-enumerator}
\tau(G) =  \prod_{i=2}^{t-1} \left( \deg(v_i)+1 \right) \prod_{i=t+1}^n \deg(v_i).
\end{equation}
\end{theorem}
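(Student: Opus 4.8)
The plan is to follow the same three‑move strategy that worked for Ferrers graphs: use Lemma~\ref{rank-one-update} to convert $\tau(G)$ into an honest determinant, use Lemma~\ref{schur-complement} to strip off the ``independent'' part of the graph, and then evaluate what remains by elementary row and column operations. Write $V_1=\{v_1,\dots,v_t\}$ and $V_2=\{v_{t+1},\dots,v_n\}$. The first thing I would record is the rigid structure that the threshold condition together with the maximality of $t$ impose: $V_1$ induces a clique, $V_2$ is an independent set, each $v_j\in V_2$ is joined to exactly the initial segment $\{v_1,\dots,v_{\deg(v_j)}\}$ of $V_1$, and $1\le \deg(v_j)\le t-1$ for every $j>t$; in particular $\deg(v_t)=t-1$. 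Connectedness enters precisely here, guaranteeing $\deg(v_n)\ge 1$ and hence $\deg(v_j)\ge 1$ for all $j>t$.

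Partition $L(G)$ into blocks along $V_1\sqcup V_2$. The $V_2\times V_2$ block is the diagonal matrix $D=\operatorname{diag}\big(\deg(v_j)\big)_{j>t}$, which is therefore invertible, and the $V_1\times V_1$ block has $\deg(v_i)$ on the diagonal and $-1$ everywhere off it. Applying Lemma~\ref{rank-one-update} with $\mathbf u=\mathbf v=\mathbf 1_{V_1}$, the indicator vector of $V_1$, gives $t^2\,\tau(G)=\det\big(L(G)+\mathbf 1_{V_1}\mathbf 1_{V_1}^T\big)$, and the point of this particular choice is that adding $\mathbf 1_{V_1}\mathbf 1_{V_1}^T$ turns the clique block into $\operatorname{diag}\big(\deg(v_i)+1\big)_{i\le t}$ while changing nothing else. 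Lemma~\ref{schur-complement} applied to $D$ then gives
\[
t^2\,\tau(G)=\det(D)\cdot\det(N),\qquad N:=\operatorname{diag}\big(\deg(v_i)+1\big)_{i\le t}-BD^{-1}B^T,
\]
where $B$ is the $V_1\times V_2$ off‑diagonal block. Using the initial‑segment structure of $B$, the $(i,k)$ entry of $BD^{-1}B^T$ works out to $\sigma_{\max(i,k)}$, where $\sigma_\ell:=\sum_{j>t,\ \deg(v_j)\ge\ell}1/\deg(v_j)$. Since $\det(D)=\prod_{j=t+1}^{n}\deg(v_j)$ is already the second product appearing in the theorem, everything comes down to showing $\det(N)=t\prod_{i=2}^{t}\big(\deg(v_i)+1\big)$, which equals $t^2\prod_{i=2}^{t-1}\big(\deg(v_i)+1\big)$ because $\deg(v_t)+1=t$.

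This determinant is the heart of the argument and the step I expect to be most delicate, since $N$ is not triangular the way the matrix $S$ was in the Ferrers case. I would evaluate it as follows. First, replace row $i$ by (row $i$)$-$(row $i+1$) for each $i<t$; the resulting matrix $\tilde N$ is banded, with row $i<t$ equal to $\big(-\delta_i,\dots,-\delta_i,\ \deg(v_i)+1-\delta_i,\ -(\deg(v_{i+1})+1),\ 0,\dots,0\big)$ where $\delta_i:=\sigma_i-\sigma_{i+1}$, and with last row $(0,\dots,0,\,t)$. The key observation is the identity $\deg(v_i)-\deg(v_{i+1})=i\,\delta_i$, where both sides count the vertices of $V_2$ of degree exactly $i$; this makes every row of $\tilde N$ other than the last sum to zero. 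Consequently, adding all columns of $\tilde N$ into its first column turns that column into $(0,\dots,0,t)^T$, and expanding the determinant along it leaves, up to a sign that works out to $+1$, the $(t-1)\times(t-1)$ submatrix on rows $1,\dots,t-1$ and columns $2,\dots,t$, which is lower triangular with diagonal entries $-(\deg(v_2)+1),\dots,-(\deg(v_t)+1)$. Multiplying these out gives $\det(N)=t\prod_{i=2}^{t}\big(\deg(v_i)+1\big)$, and substituting back into the displayed identity yields the claimed formula. The degenerate case $t=n$ (where $G=K_n$ and $V_2$ is empty) is covered by the same computation read with all empty sums and products, or can be dispatched directly.
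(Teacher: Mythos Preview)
Your proof is correct, and it follows the same overall template as the paper (Lemma~\ref{rank-one-update} then Lemma~\ref{schur-complement}), but it diverges at the crucial choice of rank-one perturbation, and this is worth noting. You take $\mathbf u=\mathbf v=\mathbf 1_{V_1}$ with $V_1=\{v_1,\dots,v_t\}$, which diagonalises the clique block but leaves the off-diagonal blocks $B,B^T$ symmetric; consequently your Schur complement $N=\operatorname{diag}(\deg(v_i)+1)-BD^{-1}B^T$ is not triangular, and you have to evaluate $\det(N)$ by hand via the row reductions and the counting identity $\deg(v_i)-\deg(v_{i+1})=i\,\delta_i$. That computation is correct and the sign bookkeeping works out, but it is the most delicate part of your argument and is entirely avoidable.

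The paper instead splits at $t-1$, setting $U=\{v_1,\dots,v_{t-1}\}$, and uses the \emph{asymmetric} update $\mathbf 1_V\mathbf 1_U^T$. This still diagonalises the clique block, but now the lower-left block becomes $B^{op}=B^T+\mathbf 1$, whose nonzero pattern is complementary to that of $B$. The effect is that $(BD^{-1}B^{op})(i,j)$ sums over vertices $v_k$ (with $k\ge t$) that are neighbours of $v_i$ but \emph{not} of $v_j$; for $i>j$ in a threshold graph this set is empty, so the Schur complement is upper triangular on the nose and its determinant can be read off immediately. This is exactly the mechanism that made the Ferrers proof work, and it is the trick your symmetric choice forfeits. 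Your identity $\deg(v_i)-\deg(v_{i+1})=i\,\delta_i$ is in effect reconstructing this nesting information after the fact. The asymmetric update buys a one-line finish; your route buys nothing extra, so it is worth internalising the paper's choice.
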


Before we give a new proof of this theorem, let us point out a familiar example.  In the special case that $t=n$, the threshold graph in question is the complete graph $K_n$.  In this case, each vertex has degree $n-1$, so the expression in equation~\eqref{threshold-enumerator} simplifies to $\tau(G) = \prod_{i=2}^{n-1}n = n^{n-2}$ as we saw in Cayley's formula. 

\begin{proof}
We begin with a few observations:  First, since $G$ is connected, $\deg(v_n) > 0$ as noted above, which implies that $\deg(v_i) > 0$ for all $i$. Next, recall that $\{v_i,v_j\} \in E(G)$ for all $i < j \leq t$ and, because $t$ was chosen to be the largest index such that $\{v_i,v_t\} \in E(G)$ whenever $i < t$, it follows that $\{v_k,v_\ell\} \notin E(G)$ whenever $t \leq k < \ell$.  Therefore, we know that the Laplacian matrix $L(G)$ can be partitioned into blocks as 
$$
L(G) = \begin{bmatrix} A & B \\ B^T & D \end{bmatrix},
$$
where $A$ is the $(t-1) \times (t-1)$ matrix whose diagonal entries are $\deg(v_i)$ for $i < t$ and whose off-diagonal entries are all $-1$, $D$ is the $(n-t+1) \times (n-t+1)$ diagonal matrix whose diagonal entries are $\deg(v_j)$ for $j \geq t$, and $B$ records the adjacencies $\{v_i,v_j\}$ with $i < t$ and $j \geq t$.

Now let $U = \{v_1,\ldots,v_{t-1}\}$ and $V = \{v_1,\ldots,v_n\}$, and consider the matrix $L(G) + \mathbf{1}_V\mathbf{1}_U^T$, where $\mathbf{1}_V$ is the $|V| \times 1$ vector of ones and $\mathbf{1}_U$ is the indicator vector for $U$ in $V$.  By Lemma \ref{rank-one-update}, $$\det(L(G) + \mathbf{1}_V\mathbf{1}_U^T) = (t-1) \cdot n \cdot \tau(G).$$  On the other hand, 
$$
L(G) + \mathbf{1}_V\mathbf{1}_U^T = \begin{bmatrix} A' & B \\ B^{op} & D \end{bmatrix}, 
$$
where $A' = A+\mathbf{1}_{U \times U}$ is a diagonal matrix with diagonal entries $\deg(v_i)+1$ for $i < t$ and $B^{op} = B^T + \mathbf{1}_{(V\setminus U) \times U}$.  Note that, for all $i < t$ and $j \geq t$, $B$ has a nonzero entry $B(i,j) = -1$ if and only if $\{v_i,v_j\} \in E(G)$ and $B^{op}$ has a nonzero entry $B^{op}(j,i) = 1$ if and only if $\{v_i,v_j\} \notin E(G)$. Since the diagonal matrix $D$ is invertible by our assumption that $\deg(v_i) > 0$ for all $i$, we can apply Lemma~\ref{schur-complement} to get that 
$$\det(L(G) + \mathbf{1}_V\mathbf{1}_U^T) = \det(D) \det(A'-BD^{-1}B^{op}).$$

As in the proof of Theorem~\ref{thm:EVW}, we can explicitly compute the entries of $BD^{-1}B^{op}$. For $i < j < t$, the entry in the row indexed by vertex $v_i$ and column indexed by $v_j$ is equal to $\sum \frac{-1}{\deg(v_k)}$, where the sum is over all $v_k$ with $k \geq t$ such that $v_k \in N(v_i)$ and $v_k \notin N(v_j)$.
Because $G$ is a threshold graph, the set of such $v_k$ is empty when $i < j$, which means $A'-BD^{-1}B^{op}$ is an upper-triangular matrix with diagonal entries $\deg(v_i) + 1$ for $1 \leq i < t$.  Since $D$ is a diagonal matrix with diagonal entries $\deg(v_k)$ for $k \geq t$, we know that 
$$
(t-1) \cdot n \cdot \tau(G) = \det(L(G) + \mathbf{1}_V\mathbf{1}_U^T) = \prod_{i=1}^{t-1} \left( \deg(v_i)+1 \right) \prod_{i=t}^n \deg(v_i).
$$
To complete the proof, we observe that because $G$ is threshold and $\deg(v_n) > 0$, it must be the case that $\{v_1,v_n\} \in E(G)$, hence $\{v_1,v_j\} \in E(G)$ for all $1 < j \leq n$.  Therefore, $\deg(v_1)+1 = n$.  Similarly, by our choice of $t$, we see that $\deg(v_t) = t-1$. The result now follows by cancelling appropriate terms from both sides. 
\end{proof}

%%%%%%%%%%%%%%%%%%%%
%%%%%%%%%%%%%%%%%%%%
\section{Conclusion}
%%%%%%%%%%%%%%%%%%%%
%%%%%%%%%%%%%%%%%%%%
We have shown that the matrix determinant lemma and method of Schur complements can be used to simplify spanning tree enumeration for several families of graphs.  While they may not shed as much light on spanning tree enumeration as a bijective proof would, they do give simpler, more direct proofs than other purely linear-algebraic approaches using the matrix-tree theorem.  Our methods also extend naturally to weighted spanning tree enumeration \cite{Klee-Stamps-weighted}. Which other families of graphs are amenable to having their spanning trees counted by Lemma \ref{rank-one-update}? We hope you can tell us!

\section*{Acknowledgments}
Steven Klee's research was supported by NSF grant DMS-1600048.  Matthew Stamps is grateful to Isabella Novik and the Department of Mathematics at the University of Washington for hosting him during the time this research was conducted.

\end{document}